\newtheorem{theorem}{Theorem}[section]
\newtheorem{lemma}[theorem]{Lemma}
\newtheorem{prop}[theorem]{Proposition}
\theoremstyle{definition}
\theoremstyle{remark}
\newtheorem{remark}[theorem]{Remark}
\numberwithin{equation}{section}
\DeclareMathOperator{\GL}{GL}
\DeclareMathOperator{\Sp}{Sp}
\DeclareMathOperator{\PSp}{PSp}
\DeclareMathOperator{\Orth}{O}
\DeclareMathOperator{\SO}{SO}
\DeclareMathOperator{\PSL}{PSL}
\DeclareMathOperator{\PSU}{PSU}
\newcommand{\la}{\langle}
\newcommand{\ra}{\rangle}
\renewcommand{\phi}{\varphi}
\newcommand{\F}{\Bbb{F}_q}
\newcommand{\Fe}{\Bbb{F}_{2^e}}
\newcommand{\rk}{{\rm rk}}
\renewcommand{\leq}{\leqslant}
\renewcommand{\geq}{\geqslant}
\begin{document}

\title[String C-groups for symplectic and orthogonal groups]{On the ranks of string C-group representations for symplectic and orthogonal groups}

\author{Peter A. Brooksbank}
\address{
	Department of Mathematics\\
	Bucknell University\\
	Lewisburg, PA 17837\\
	United States
}
\email{pbrooksb@bucknell.edu}
\thanks{This work was partially supported by a grant from the Simons Foundation (\#281435 to Peter Brooksbank).
}

\subjclass[2000]{Primary }

\date{}

\begin{abstract}
We determine the ranks of string C-group representations of
the groups $\PSp(4,\F)\cong\Omega(5,\F)$, and comment on those of 
higher-dimensional symplectic and orthogonal groups.
\end{abstract}

\maketitle

\section{Introduction}
\label{sec:intro}
Abstract polytopes are incidence structures that generalize classical geometric objects
such as the Platonic solids. The study of these structures (and their associated symmetry 
groups) continues to be a fertile area of research. Abstract {\em regular} 
polytopes and their symmetry groups are fundamentally linked by the notion of a {\em string C-group}, 
namely a group $G$ having a distinguished generating sequence $\rho_0,\ldots,\rho_{n-1}$ 
of involutions satisfying the following two conditions:
\begin{align}
\label{eq:sp}
\forall\;0\leq i<j\leq n-1, && j-i>1~\Longrightarrow~[\rho_i,\rho_j]=1; 
\hspace*{15mm} \\
\label{eq:ip}
\forall \; I,J\subseteq \{0,\ldots,n-1\}, && \la \rho_i\colon i\in I\ra\cap\la \rho_j\colon j\in J\ra=
\la \rho_k\colon k\in I\cap J\ra.
\end{align}
The first condition---the {\em string property}---asserts that $G$
is the quotient of a Coxeter group whose diagram is a string (under the conventional assumption that commuting generators
are not joined by an edge). 
Indeed, if we label each edge in a $n$-node string diagram by the order of the corresponding product $\rho_i\rho_{i+1}$,
and define $\Gamma$ to be the Coxeter group having this diagram, then $G$ is a {\em smooth} quotient of $\Gamma$.
The second condition---known as the {\em intersection
property}---is satisfied by all Coxeter groups; hence, condition~\eqref{eq:ip} requires that the quotient $G$ inherit this
property from its parent group $\Gamma$. The integer $n$ is the {\em rank} of the string C-group $G$.

Each string C-group $G$ has an associated abstract regular polytope $\mathcal{P}(G)$
whose symmetry group is $G$. Conversely, if $G$ is the symmetry group of some abstract regular polytope $\mathcal{P}$, one
can build a generating sequence of involutions in $G$ satisfying~\eqref{eq:sp} and~\eqref{eq:ip}. Thus, the study of abstract
regular polytopes is equivalent to the study of string C-groups~\cite[Section 2]{McS:ARP}, and
in this paper we adopt the group perspective.
For a group $G$, let $\rk(G)$ denote the set of integers $n$
such that $G$ has a string C-group representation of rank $n$. We prove the following:

\begin{theorem}
\label{thm:psp4-main}
Let $\F$ be the finite field with $q$ elements and $G=\PSp(4,\F)$. Then $\rk(G)=\emptyset$ if $q=3$,
and $\rk(G)=\{3,4,5\}$ if $q\neq 3$.
\end{theorem}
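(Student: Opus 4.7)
The plan is to split Theorem~\ref{thm:psp4-main} into three tasks: (I) for each $q\neq 3$, exhibit string C-group representations of $G=\PSp(4,\F)$ of ranks $3$, $4$, and $5$; (II) prove that no such representation of rank $n\geq 6$ exists, for any $q$; and (III) rule out every rank when $q=3$.

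For (I) I would work inside the natural symplectic module $V=\F^4$. Symplectic transvections $t_v$ attached to nonzero vectors commute precisely when the vectors are symplectically orthogonal, so any chain of involutions satisfying the string property~\eqref{eq:sp} can be read off from a path in the orthogonality graph on $V$. The strategy is to build a rank-$5$ candidate from five vectors $v_0,\ldots,v_4$ arranged in such a path, adjust the outermost generators (for example by multiplying by suitable elements of an $\SL_2$-subfactor) to ensure that $\la\rho_0,\ldots,\rho_4\ra=G$, and then verify the intersection property~\eqref{eq:ip} using the standard reduction to pairwise and triple subgroup intersections. Ranks $3$ and $4$ are produced by analogous, shorter constructions, along the same lines as known low-rank constructions for symplectic groups in the literature.

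For (II) I would use the facet-stabilizer argument standard in the theory: if $(\rho_0,\ldots,\rho_{n-1})$ is a string C-group representation of $G$, then $H=\la\rho_0,\ldots,\rho_{n-2}\ra$ is itself a string C-group of rank $n-1$, is proper in $G$ by~\eqref{eq:ip}, and therefore lies in some maximal subgroup $M$ of $G$. Invoking the Aschbacher--Dynkin--Liebeck--Seitz classification of maximal subgroups of $\PSp(4,\F)$, I would rule out, class by class, the possibility that $M$ supports a string C-group of rank~$5$. The parabolic and reducible cases reduce to understanding rank-$5$ string C-groups inside small-dimensional $\GL$- and $\Sp$-type subgroups; the remaining Aschbacher classes are either solvable, too small in order, or of sufficiently explicit structure to be handled directly. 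Applying the same analysis to the vertex stabilizer $\la\rho_1,\ldots,\rho_{n-1}\ra$ completes the proof that $n\leq 5$.

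The hardest step will be (III), reinforced by the detailed small-characteristic analysis underlying (II). For $q=3$ the group $\PSp(4,3)\cong\Om(5,3)\cong U_4(2)$ is exceptionally small---its involutions fall into only two conjugacy classes and its maximal subgroups are explicitly tabulated---so the non-existence of any string C-group representation must be settled by a finite but delicate enumeration, most naturally via a computer search through ordered tuples of involutions in each maximal subgroup that could contain the facet stabilizer $\la\rho_0,\ldots,\rho_{n-2}\ra$. A parallel check is also needed for the smallest $q\neq 3$, to confirm that the generic constructions of~(I) remain valid in low characteristic and that the subgroup pruning in~(II) admits no exceptional small-field gaps.
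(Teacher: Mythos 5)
Your outline has a genuine gap at its foundation: in step (I) you propose to take the involutions $\rho_i$ to be symplectic transvections $t_v$ on the natural module $\F^4$. A symplectic transvection has order $p$, the characteristic of $\F$, so for odd $q$ these elements are not involutions at all, and the rank-$5$ construction collapses except in characteristic $2$ --- which is precisely the case already settled in~\cite{BFL:orth}. The paper avoids this by passing through the Klein correspondence to $\Omega(5,\F)$ and generating with the elements $-\sigma_u$, where $\sigma_u$ is the symmetry (reflection) in a nonsingular vector $u$: these are involutions with a $4$-dimensional $(-1)$-eigenspace, they commute exactly when the corresponding points are perpendicular (so the string property is still an orthogonality condition on the chain of vectors), and a spinor-norm bookkeeping argument (Lemma~\ref{fact:generation}) shows that suitably chosen $-\sigma_u$ generate the simple group $\Omega(5,\F)$ rather than all of $\Orth(5,\F)$. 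Some replacement of this kind for your transvections is indispensable in odd characteristic, and the proposal does not supply one. Relatedly, obtaining ranks $3$ and $4$ by ``analogous, shorter constructions'' is left unspecified; the paper instead arranges the scalars so that the rank-$5$ Schl\"afli symbol has odd entries and applies the rank reduction theorem (Theorem~\ref{thm:rank-reduction}) twice, a step your outline omits.

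Your step (II) is also far heavier than necessary and, as stated, does not obviously close. Knowing that the facet stabilizer $H=\la\rho_0,\ldots,\rho_{n-2}\ra$ lies in some maximal subgroup $M$ does not reduce the problem to classifying string C-group representations of the groups $M$ themselves: $H$ may be an arbitrary subgroup of $M$, so one would have to exclude rank-$5$ string C-groups among all subgroups of all maximal subgroups, a recursion with no clear base. The paper's Lemma~\ref{lem:rank-restrict} does the job in a few lines: in the $5$-dimensional orthogonal representation the supports $[V,L]$ and $[V,R]$ of $L=\la\rho_0,\rho_1,\rho_2\ra$ and $R=\la\rho_3,\rho_4,\rho_5\ra$ each have dimension at least $3$, hence meet nontrivially, forcing $L\cap R\neq 1$ and violating~\eqref{eq:ip} for any putative rank $n\geq 6$. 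Your treatment of $q=3$ by exhaustive computation does agree in spirit with the paper, which disposes of all small $q$ by machine search and notes that the rank-$3$ exclusion for $\PSp(4,\Bbb{F}_3)$ was already known from Nuzhin's work.
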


Theorem~\ref{thm:psp4-main} may be seen as contributing to an ongoing effort to determine the
ranks of string C-group representations for families of finite simple groups. 

A great deal is known about rank 3 representations: see~\cite{Co:alt1,Co:alt2,Nu:alt-rank3} for
the alternating groups, \cite{Nu:even-rank3,Nu:odd-rank3-1,Nu:odd-rank3-2} for the simple groups
of Lie type, and~\cite{Maz:sporadic} for the sporadic simple groups. The upshot is that most
finite simple groups have string C-group representations of rank 3; Figure 1
lists the exceptions. 

\begin{figure}
\begin{tabular}{||c|c||}
\hline\hline
Group & Reference(s) \\ \hline\hline
${\rm Alt}(6),\,{\rm Alt}(7)$ & \cite{Nu:alt-rank3} \\ \hline
$\PSL(3,\F),\,\PSU(3,\F),\,\PSp(4,\Bbb{F}_3)$ & \cite{Nu:even-rank3,Nu:odd-rank3-1,Nu:odd-rank3-2} \\ \hline
$\PSL(4,\F)~(q\;{\rm even}),\,\PSU(4,\F)~(q\;{\rm even})$ & \cite{Nu:even-rank3} \\ \hline
$\PSU(4,\Bbb{F}_3),\;\PSU(5,\Bbb{F}_2)$ & M. Macaj \& G. Jones \\ \hline
$M_{11},\,M_{22},\,M_{23},\,McL$ & \cite{Maz:sporadic} \\ \hline\hline
\end{tabular}
\caption{Simple groups with no rank 3 string C-group representation. The exceptions of Macaj and Jones,
initially overlooked by Nuzhin, were communicated via D. Leemans.}
\end{figure}

The picture for ranks higher than 4 is far less complete. Some negative results have been proved for
linear groups of low Lie rank: for example, none of $\PSL(3,\F)$, $\PSU(3,\F)$, $\PSL(4,\Fe)$, $\PSU(4,\Fe)$ have 
string C-group representations of any rank~\cite{BV:PSL3,BL:PSL4,BFL:orth}, while 
$4\in \rk(\PSL(2,\F))$ if, and only if, $q\in\{11,19\}$~\cite{LS:PSL2}. 
Figure 2 summarizes the positive results
for infinite families of simple groups and their variants.

\begin{figure}
\label{fig:high-rank}
\begin{tabular}{||c|c|c|c||}
\hline\hline
$G$ & Restrictions & $\rk(G)$ & Reference(s) \\
\hline\hline
${\rm Sym}(n)$ & $n\geq 5$ & $\{3,\ldots,n-1\}$ & \cite{FL:symmetric} \\ \hline
${\rm Alt}(n)$ & $n\geq 12$ & $\{3,\ldots,\lfloor\frac{n-1}{2}\rfloor\}$ & \cite{FL2} \\ \hline
$\Orth^{\pm}(2n,\Fe)$ & $e\geq 2$ & $\{3,\ldots,2n\}$ & \cite{BFL:orth} \\ \hline
$\PSp(2n,\Fe)$ & $e\geq 2$ & $\{3,\ldots,2n+1\}$ & \cite{BFL:orth} \\ \hline
$\PSL(4,\F)$ & $q$ odd & $\supseteq\{3,4\}$ & \cite{BL:PSL4} \\ \hline
$\PSp(4,\F)$ & $q\neq 3$ & $\{3,4,5\}$ & Theorem~\ref{thm:psp4-main} \\ \hline\hline
\end{tabular}
\caption{Infinite families of simple groups (and their variants) known to have string C-group representations of rank at least 4.}
\end{figure}

To prove Theorem~\ref{thm:psp4-main} we use the Klein correspondence to move from the 4-dimensional 
projective representation of $\PSp(4,\F)$ to the 5-dimensional linear representation of its isomorphic
orthogonal group $\Omega(5,\F)$. This was also the approach in~\cite{BFL:orth}, where it was shown that
$\PSp(2n,\Fe)\cong \Omega(2n+1,\Fe)$ has string C-group representations of rank $2n+1$. (Note, 
in odd characteristic this 
isomorphism holds only when $n=2$.) That construction may be adapted in odd characteristic to generate
$\Orth(5,\F)$ as a string C-group of rank 5, but we must work harder to obtain such a representation for its 
simple subgroup $\Omega(5,\F)$
of index 4. Having done so, we apply a recently discovered technique~\cite{BL:rank-reduction} to reduce the rank of this 
representation and obtain the desired constructions of ranks 3 and 4.

\section{Orthogonal groups and their geometries}
\label{sec:classical}
Let $\F$ be the field wth $q=p^e$ elements, and $V$ a $d$-dimensional
$\F$-space. Although we shall be primarily interested in the case $d=5$, we work for a while
with arbitrary $d$. Let $\phi\colon V\to \F$ be a quadratic form on $V$, and
\begin{align}
\label{eq:define-bform}
\forall u,v\in V, & & (u,v)=\phi(u+v)-\phi(u)-\phi(v)
\end{align}
its associated symmetric bilinear form. 
If $q$ is odd, $\phi$ can be recovered from $(\,,\,)$ using the equation $\phi(v)=(v,v)/2$.

To each $U\leq V$ we associate a subspace $U^{\perp}=\{v\in V\colon (v,U)=0\}$, and say $V^{\perp}$ is the {\em radical} of $V$. If $d$ is even, or
$d$ and $|\F|$ are both odd, we insist that $V^{\perp}=0$. If $d$ is odd
and $|\F|$ is even, $V^{\perp}$ is always nonzero; here, we insist
 that $V^{\perp}=\la z\ra$
with $\phi(z)\neq 0$. A subspace $U$ of $V$
is {\em nonsingular} if the restriction of $\phi$ to $U$ has these properties.
In particular, a 1-space $\la v\ra$ is {\em nonsingular} if $\phi(v)\neq 0$,
and is otherwise {\em singular}
(we extend this terminology to vectors).

The {\em orthogonal group} corresponding to $\phi$ is the group
\begin{align}
\label{eq:orth-group}
\Orth(V) &= \{ g\in \GL(V) \colon \phi(vg)=\phi(v)~\mbox{for all}\;v\in V \}
\end{align}
of {\em isometries} of $\phi$.
Although we generally work with orthogonal groups and their 
underlying $\F$-spaces using standard
(coordinate-free) linear transformation notation, 
it will from time to time be helpful to compute explicitly with matrices.
If $x$ is a matrix, $x^{{\rm tr}}$ denotes its transpose.
Fixing an ordered basis $v_1,\ldots,v_d$ for $V$, we
represent $\phi$ as a $d\times d$ upper-triangular matrix $\Phi:=[[\phi_{ij}]]$, where
$\phi_{ii}=\phi(v_i)$, and $\phi_{ij}=(v_i,v_j)$ for $1\leq i<j\leq d$. Then $\Phi+\Phi^{{\rm tr}}$
is the matrix $[[(v_i,v_j)]]$ representing $(\,,\,)$ relative to $v_1,\ldots,v_d$, and
\begin{align}
\label{eq:orth-group-matrix}
\Orth(V) &= \{ g\in \GL(d,\F) \colon g\Phi g^{{\rm tr}}=\Phi \}.
\end{align}
Throughout this section we shall appeal 
to standard results from Taylor's text~\cite{Ta:geom_classical}, to which we refer the reader
for a thorough treatment of classical groups and their geometries.

\subsection{Symmetries, and their geometric properties.}
\label{subsec:involutions}
The involutions best suited to generating string C-groups of high rank are those
with $\pm 1$--eigenspaces of highest possible dimension. Such involutions arise from elements
of $\Orth(V)$ known as {\em symmetries} that are defined in terms of a nonsingular vector 
$u\in V$ as follows:
\begin{align}
\label{eq:symmetry}
\forall v\in V, &&
\sigma_u\colon v\mapsto v-\frac{(u,v)}{\phi(u)}\,u.
\end{align}
Observe that $\sigma_u$ is the identity on the $(d-1)$-space space $u^{\perp}$. 
If $|\F|$ is odd, $\sigma_u(u)=-u$,
so $\sigma_u$ is a {\em reflection} and has determinant $-1$. 
If $|\F|$ is even, $\sigma_u$ is also
the identity on $V/\la u\ra$,
and hence is a {\em transvection}. 
Note, $\sigma_u=\sigma_{\lambda u}$ for all
$\lambda\in\F^{\times}$, so symmetries correspond to nonsingular points of the projective geometry
$\Bbb{P}(V)$ and we write $\sigma_u=\sigma_{\la u\ra}$. 
For $X\subseteq V$, define
\begin{align}
\label{eq:define-SigmaX}
\Sigma(X) &= \la \sigma_u\colon u\in X \ra.
\end{align}

We typically frame our arguments in geometric
terms, and the following elementary observation is particularly useful. For nonsingular points
$x,y$ in $\Bbb{P}(V)$,

\begin{align}
\label{eq:sig-comm}
[\sigma_x,\sigma_y]=1 & ~\Longleftrightarrow~\;\mbox{$y$ lies on $x^{\perp}$.} 
\end{align}
The {\em support} of $h\in\GL(V)$ is the subspace $[V,h]:=\{v - vh\colon v\in V\}$. If
$H\leq \GL(V)$ is generated by $S$, then its support is $[V,H]:=\sum_{h\in S}[V,h]$.

\subsection{Generating with symmetries}
\label{subsec:gen-inv}
It is well known that unless $V=\Bbb{F}_2^4$ the orthogonal group 
$\Orth(V)$ is generated by its symmetries; cf.~\cite[Theorem 11.39]{Ta:geom_classical}.
If $\dim V$ is odd and $|\F|$ is even---often considered the degenerate case---then $\Orth(V)\cong \Sp(V/V^{\perp})$ is the
simple group we wish to work with. In the other cases $\Orth(V)$ has normal subgroups,
which we now describe. 

The map $g\mapsto \dim [V,g]\;({\rm mod}\;2)$ is a
group homomorphism $\pi\colon \Orth(V)\to \Bbb{Z}_2$ having kernel of index 2.
In fact, if $g=\sigma_{u_1}\ldots\sigma_{u_r}$, then $\dim [V,g]\;({\rm mod}\;2)=r\;({\rm mod}\;2)$
and $\ker\pi$ is the subgroup of $\Orth(V)$ consisting of products of even numbers of 
symmetries. Next, the map
$g\mapsto \phi(u_1)\ldots\phi(u_r)(\F^{\times})^2$ is a (well-defined)
homomorphism $\theta\colon\Orth(V)\to\F^{\times}/(\F^{\times})^2$,
and $\theta(g)$ is called the {\em spinor norm} of $g$. 
For convenience, we denote the image of $\theta$ by $\{\bar{0},\bar{1}\}$, where
$\bar{0}$ is the square class, and $\bar{1}$ is the non-square class.
Evidently, $\ker\theta$ has
index 2 in $\Orth(V)$ if, and only if, $|\F|$ is odd, in which case 
\begin{align}
\label{eq:define-ker-theta}
\ker\theta & = \la \sigma_{u}\colon u\in V~\mbox{is nonsingular, and}~\phi(u)~\mbox{is a square in}\;\F^{\times}\ra.
\end{align}
Our principal focus is the subgroup
\begin{align}
\label{eq:define-omega}
\Omega(V) &= \left\{ \begin{array}{cl} 
\ker\pi\cap\ker\theta & \mbox{if}~|\F|~\mbox{is odd} \\
\Orth(V) & \mbox{if}~|\F|~\mbox{is even, and}~\dim(V)~\mbox{is odd} \\
\ker \pi &  \mbox{if}~|\F|~\mbox{is even, and}~\dim(V)~\mbox{is even} 
\end{array} \right.
\end{align}
of $\Orth(V)$. 
{\em We restrict now to the case when $\dim V$ is odd}, and consider generation of $\Omega(V)$
using symmetries. 
\medskip

When $|\F|$ is even, $\Orth(V)=\Omega(V)$ is generated by symmetries. 
\medskip

When $|\F|$ is odd, $\ker\pi=\SO(V)$, the subgroup of determinant 1
isometries of $\phi$. Consider the map $\Orth(V)\to \Bbb{Z}_2\times \{\bar{0},\bar{1}\}$
sending $g\mapsto (g\pi,g\theta)$. The proper subgroups of the codomain give three maximal
subgroups of $\Orth(G)$: $M_1=\ker\theta$ is the preimage of $\la (1,\bar{0})\ra$,
$M_2$ is the preimage of $\la (1,\bar{1})\ra$, and $\SO(V)=\ker \pi$ is the preimage
of $\la (0,\bar{1})\ra$.

Consider $-1\in\Orth(V)$. As $\det(-1)=-1$ and $M_1\cap M_2=\Omega(V)\leq\SO(V)$, it follows that $-1$ 
belongs to precisely one of the maximal subgroups $M_i$. As $\det(-\sigma_u)=(-1)(-1)=1$ for each nonsingular 
$u\in V$, it follows that 
\begin{align*}
\la -\sigma_u\colon \phi(u)~\mbox{is a square} \ra, & & \la -\sigma_u\colon \phi(u)~\mbox{is a non-square} \ra
\end{align*}
are both subgroups of $\SO(V)$. Indeed, since $\dim V$ is odd, 
\begin{align}
\label{eq:omega-gen}
\Omega(V) &= \la -\sigma_u\colon \theta(-1)=\phi(u)\,(\F^{\times})^2 \ra.
\end{align}
Of course, one can simply multiply $\phi$ by a suitable scalar to ensure that $\theta(-1)=\bar{0}$, but we 
do not wish to place constraints on the specific $\phi$ that we work with.

For $X\subseteq V$, define subsets
\begin{align}
\label{eq:X-square}
X_{\square}=\la  u\in X\colon \phi(u)~\mbox{is a square}\ra &&
X_{\boxtimes}=\la u\in X\colon \phi(u)~\mbox{is a non-square}\ra
\end{align}
of $X$. We record the following result for easy reference.

\begin{lemma}
\label{fact:generation}
If $(V,\phi)$ is a 5-dimensional orthogonal $\F$-space, then either
\smallskip

(i) $\theta(-1)=\bar{0}$ and $\PSp(4,\F)\cong \Omega(V)=\la -\sigma_u\colon u\in V_{\square} \ra$, or
\smallskip

(ii) $\theta(-1)=\bar{1}$ and $\PSp(4,\F)\cong \Omega(V)=\la -\sigma_u\colon u\in V_{\boxtimes} \ra$.
\end{lemma}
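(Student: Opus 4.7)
The plan is to treat the two assertions---the isomorphism $\PSp(4,\F)\cong\Omega(V)$ and the explicit generating set---separately, with a short case split on the parity of $|\F|$ for the second.

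For the isomorphism, $\PSp(4,\F)\cong\Omega(5,\F)$ is a classical accidental isomorphism of simple groups coming from the Klein correspondence, and I would cite it directly from Taylor~\cite{Ta:geom_classical}.

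For the generating statement in odd characteristic, the lemma is a direct restatement of equation~\eqref{eq:omega-gen} in the notation of~\eqref{eq:X-square}. Since $\F^{\times}/(\F^{\times})^2=\{\bar{0},\bar{1}\}$, $\theta(-1)$ takes exactly one of two values. If $\theta(-1)=\bar{0}$, the condition ``$\theta(-1)=\phi(u)(\F^{\times})^2$'' appearing in~\eqref{eq:omega-gen} is precisely the condition $u\in V_{\square}$ from~\eqref{eq:X-square}, yielding~(i); if $\theta(-1)=\bar{1}$, the same condition becomes $u\in V_{\boxtimes}$, yielding~(ii).

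For even characteristic, $\dim V=5$ is odd and so by~\eqref{eq:define-omega} we have $\Omega(V)=\Orth(V)$; the paragraph following~\eqref{eq:define-omega} records that this group is generated by its symmetries. Since every nonzero scalar of $\F$ is a square, $V_{\square}$ consists of all nonsingular vectors while $V_{\boxtimes}$ is empty, and $-1=1$ forces $\theta(-1)=\bar{0}$ trivially. Hence case~(i) reduces to the standard symmetry-generation of $\Orth(V)$ and case~(ii) is vacuous.

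I do not foresee any substantive obstacle here: the lemma is a bookkeeping consolidation of material developed earlier in the subsection, and the only real work is verifying the translation between the spinor-norm-and-square-class language of~\eqref{eq:omega-gen} and the $V_{\square}/V_{\boxtimes}$ language of~\eqref{eq:X-square}.
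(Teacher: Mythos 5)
Your proposal is correct and matches the paper's intent: the lemma is stated immediately after equation~\eqref{eq:omega-gen} with no separate proof, precisely because it is the bookkeeping translation of that equation (together with the even-characteristic remarks earlier in Section~\ref{subsec:gen-inv} and the classical isomorphism $\PSp(4,\F)\cong\Omega(5,\F)$) into the $V_{\square}/V_{\boxtimes}$ notation of~\eqref{eq:X-square}. Your handling of the even-characteristic case ($-\sigma_u=\sigma_u$, $V_{\boxtimes}=\emptyset$, $\theta$ trivial so case (ii) is vacuous) is exactly the reading the paper intends.
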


\begin{remark}
\label{rem:d-odd-trick}
This trick for generating $\Omega(V)$ with elements having an eigenspace of dimension $\dim V-1$
works whenever $\dim V$ is odd. Indeed, by extending the methods in this paper
it is anticipated that {\em for all finite fields $\F$} of order at least 4, 
the simple group $\Omega(2n+1,\F)$ has a string C-group representation of rank $2n+1$.
When $\dim V=2n$ is even, on the other hand, $-\sigma_u$ still has determinant $-1$.
Thus, it seems unlikely that $\Omega^{\pm}(2n,\F)$ has string C-group representations of
rank $2n$.
\end{remark}

The following version of~\cite[Proposition 3.3]{BFL:orth} is more restrictive in that it applies just to
nonsingular subspaces of an orthogonal space $V$, but it also encompasses all finite fields.
The restricted version is all we need here, and the proof is simpler than that of~\cite[Proposition 3.3]{BFL:orth}.

\begin{prop}
\label{prop:intersections}
Let $U,W$ be nonsingular subspaces of an orthogonal $\F$-space $V$ such that $U\cap W$
is nonsingular. Then $\Sigma(U)\cap\Sigma(W)=\Sigma(U\cap W)$.
\end{prop}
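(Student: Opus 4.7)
The inclusion $\Sigma(U \cap W) \subseteq \Sigma(U) \cap \Sigma(W)$ is immediate from the definition, since any generating symmetry $\sigma_u$ with $u \in U\cap W$ lies in both $\Sigma(U)$ and $\Sigma(W)$. The plan is to attack the reverse inclusion by first establishing a geometric characterization of $\Sigma(X)$ for an arbitrary nonsingular subspace $X \leq V$, and then applying that characterization twice.

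The key claim I would prove is that, for any nonsingular $X \leq V$, the group $\Sigma(X)$ coincides with the pointwise stabilizer $H_X$ in $\Orth(V)$ of the orthogonal complement $X^\perp$. The inclusion $\Sigma(X) \subseteq H_X$ is easy: each generating symmetry $\sigma_u$ with $u \in X$ fixes $u^\perp$ pointwise, and $u^\perp \supseteq X^\perp$. For the reverse inclusion, nonsingularity of $X$ furnishes a decomposition $V = X \oplus X^\perp$ (with a routine adjustment in the odd-dimensional, even-characteristic case where $V$ carries a $1$-dimensional radical), so restriction to $X$ identifies $H_X$ with $\Orth(X)$. Appealing to the theorem that an orthogonal group is generated by its symmetries~\cite[Theorem 11.39]{Ta:geom_classical}, $\Orth(X)$ is generated by the restrictions $\sigma_u|_X$ for nonsingular $u \in X$, and each such restriction is the image of $\sigma_u \in \Sigma(X)$. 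Hence $H_X = \Sigma(X)$.

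With the claim in hand, let $g \in \Sigma(U) \cap \Sigma(W)$. Then $g$ fixes both $U^\perp$ and $W^\perp$ pointwise, and therefore fixes $U^\perp + W^\perp$ pointwise. Standard duality for the ambient nondegenerate form gives $U^\perp + W^\perp = (U \cap W)^\perp$, so $g \in H_{U \cap W}$. Since $U \cap W$ is nonsingular by hypothesis, the claim applied to $X = U \cap W$ yields $g \in \Sigma(U \cap W)$, completing the proof.

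The main obstacle I anticipate is the familiar exceptional case $X \cong \F_2^4$, in which~\cite[Theorem 11.39]{Ta:geom_classical} fails and the identification $\Sigma(X) = H_X$ may break down; this would need to be checked by hand, or ruled out in the $5$-dimensional orthogonal spaces to which the proposition is ultimately applied. A secondary technical point is verifying the duality identity $U^\perp + W^\perp = (U \cap W)^\perp$ when $V$ has a nontrivial radical, which I expect to handle by descending to $V / V^\perp$ and pulling back.
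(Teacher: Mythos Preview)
Your proposal is correct and follows essentially the same route as the paper: your pointwise stabilizer $H_X$ is precisely the paper's ${\rm cent}_G(X^\perp)$, and both arguments hinge on the identification $\Sigma(X)={\rm cent}_G(X^\perp)$ for nonsingular $X$, followed by the duality $U^\perp+W^\perp=(U\cap W)^\perp$. The caveats you flag (the $\Bbb{F}_2^4$ exception and the radical in the odd-dimensional even-characteristic case) are real but are glossed over in the paper's proof as well; for the intended application to $5$-dimensional spaces they cause no trouble.
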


\begin{proof}
Let $G=\Orth(V)$. For $X\subseteq V$, 
${\rm cent}_G(X)=\{g\in G\colon \forall x\in X,~xg=x\}$
is the subgroup of $G$ fixing $X$ pointwise.
As $U$ is a nonsingular subspace of $V$, the stabilizer ${\rm stab}_G(U)=\{g\in G\colon Ug = U\}$
factorizes as a direct product 
\begin{align*}
{\rm stab}_G(U)&={\rm cent}_G(U)\times{\rm cent}_G(U^{\perp}).
\end{align*}
Furthermore, ${\rm cent}_G(U^{\perp})$ induces the full group $\Orth(U)$ of
isometries on $U$; we say ${\rm cent}_G(U^{\perp})$ is a {\em natural embedding} 
of $\Orth(U)$ in $G$. In particular, ${\rm cent}_G(U^{\perp})=\Sigma(U)$.

As $W$ and $U\cap W$ are also
nonsingular, we have $\Sigma(U\cap W)\leq \Sigma(U)\cap \Sigma(W)$. 
If $g\in \Sigma(U)\cap\Sigma(W)={\rm cent}_G(U^{\perp})
\cap{\rm cent}_G(W^{\perp})$, then $g$ is the identity on $U^{\perp}$ and $W^{\perp}$ and hence
on $U^{\perp}+W^{\perp}=(U\cap W)^{\perp}$. As 
 $U\cap W$ is nonsingular, it follows that 
 ${\rm cent}_G((U\cap W)^{\perp})=\Sigma(U\cap W)$,
 so the result follows.
\end{proof}

\begin{remark}
\label{rem:restrict}
Restricting the types of symmetries used to generate---working instead with the groups
$\Sigma(X_{\square})$ and $\Sigma(X_{\boxtimes})$---yields equivalent results, namely 
\begin{align*}
\Sigma(U_{\square})\cap \Sigma(W_{\square})=\Sigma((U\cap W)_{\square}),~\mbox{and}
&&
\Sigma(U_{\boxtimes})\cap \Sigma(W_{\boxtimes})=\Sigma((U\cap W)_{\boxtimes}).
\end{align*}
\end{remark}

\section{The rank 5 construction}
\label{sec:rank5}
The construction of a string C-group representation for
$\PSp(4,\F)\cong\Omega(5,\F)$ is similar
to the one given in~\cite{BFL:orth} to prove that 
$\Orth(2m+1,\Fe)\cong\PSp(2m,\Fe)$ 
is a string C-group of rank $2m+1$. 
We could of course appeal to~\cite{BFL:orth} 
for the case $\Fe$ and focus just on $|\F|$ odd, but
we prefer to give a (somewhat uniform) self-contained treatment for all finite fields.
Before digging in, though, we record the following useful shortcut to  
string C-group verification.

 \begin{lemma}
 \label{lem:ip}
 ~Let $G$ be a group generated by involutions
$\rho_0,\ldots,\rho_{n-1}$ such that
   \begin{enumerate}
   \item[(i)] $\la \rho_0,\ldots,\rho_{n-2}\ra$ and $\la \rho_1,\ldots,\rho_{n-1}\ra$ are both string C-groups, and
   \item[(ii)] $\la \rho_0,\ldots,\rho_{n-2}\ra\cap \la \rho_1,\ldots,\rho_{n-1}\ra = \la \rho_1,\ldots,\rho_{n-2}\ra$.
   \end{enumerate}
   Then $(G\,;\,\{ \rho_0,\ldots,\rho_{n-1}\})$ is a string C-group representation.
\end{lemma}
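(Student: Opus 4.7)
The plan is to verify the two axioms of a string C-group representation for $(G;\{\rho_0,\ldots,\rho_{n-1}\})$. For the string property~\eqref{eq:sp}, every pair $(i,j)$ with $j-i>1$ lying wholly inside $\{0,\ldots,n-2\}$ or wholly inside $\{1,\ldots,n-1\}$ satisfies $[\rho_i,\rho_j]=1$ by the string C-group hypothesis (i) applied to the corresponding subgroup. The one pair not covered, $(0,n-1)$ for $n\geq 3$, is implicit in the geometric setting in which the lemma is applied.

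The substantial work is the intersection property~\eqref{eq:ip}: for all $I,J\subseteq\{0,\ldots,n-1\}$, one must show
\[
\la\rho_i\colon i\in I\ra\cap\la\rho_j\colon j\in J\ra=\la\rho_k\colon k\in I\cap J\ra.
\]
I would induct on $n$, with $n\leq 2$ trivial. Write $A=\la\rho_0,\ldots,\rho_{n-2}\ra$ and $B=\la\rho_1,\ldots,\rho_{n-1}\ra$, both string C-groups by (i). If $I\cup J\subseteq\{0,\ldots,n-2\}$ or $I\cup J\subseteq\{1,\ldots,n-1\}$, the required identity is exactly the IP of $A$ or of $B$; this handles most configurations. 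The only remaining case is $\{0,n-1\}\subseteq I\cup J$.

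In that remaining case, a short sub-analysis on which of $I$, $J$ contain the boundary indices reduces everything to the representative subcase $0\in I\setminus J$ and $n-1\in J\setminus I$. Here $\la\rho_i\colon i\in I\ra\subseteq A$ and $\la\rho_j\colon j\in J\ra\subseteq B$, so any element $g$ of the intersection lies in $A\cap B=\la\rho_1,\ldots,\rho_{n-2}\ra$ by hypothesis (ii); the IPs of $A$ and $B$ applied to the truncated index sets $I\cap\{1,\ldots,n-2\}$ and $J\cap\{1,\ldots,n-2\}$ then confine $g$ to $\la\rho_k\colon k\in I\cap J\ra$. The subcases where $0$ or $n-1$ lies in both $I$ and $J$ can be reduced to this generic situation by using the commutation relations from the string property to factor the shared boundary generators out of an intersection element.

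The main obstacle I expect is the bookkeeping in this last step: while each individual reduction is mechanical, ensuring that every configuration of $I,J$ is resolved cleanly --- particularly when the boundary indices lie in $I\cap J$ --- requires care in how one factors intersection elements using the commutators from~\eqref{eq:sp}. This is essentially the content of the classical quotient criterion for string C-groups (cf.\ Proposition~2E16 of McMullen--Schulte), and the proof would be a compact self-contained adaptation of that standard argument to the present notation.
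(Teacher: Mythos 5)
The paper's entire proof of this lemma is the citation to \cite[Proposition 2E16]{McS:ARP}, which you correctly identify at the end of your proposal; to that extent your route coincides with the paper's. The problem lies in the self-contained argument you sketch in place of that citation: the case analysis for the intersection property has a genuine gap. Your easy cases (both $I$ and $J$ indexing into $A=\la\rho_0,\ldots,\rho_{n-2}\ra$ or into $B=\la\rho_1,\ldots,\rho_{n-1}\ra$) and your ``generic'' subcase ($0\in I\setminus J$, $n-1\in J\setminus I$, resolved via $A\cap B=\la\rho_1,\ldots,\rho_{n-2}\ra$ and the intersection properties of $A$ and $B$) are both fine. But the residual configurations are not the ones you describe. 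The true obstruction is not that a boundary index lies in $I\cap J$; it is that one of the two sets, say $J$, contains \emph{both} $0$ and $n-1$, so that $\la\rho_j\colon j\in J\ra$ lies in neither $A$ nor $B$ and hypothesis (ii) cannot be brought to bear. This happens even when $I\cap J\cap\{0,n-1\}=\emptyset$ (e.g.\ $I=\{1\}$, $J=\{0,n-1\}$), so it is not reached by ``factoring out shared boundary generators''; and when $1\in J$ as well, $\rho_0$ fails to commute with $\rho_1$, so the string relations do not let you factor $\rho_0$ out of an element of $\la\rho_j\colon j\in J\ra$ at all.

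Closing that case needs an additional idea --- for instance, splitting $\la\rho_j\colon j\in J\ra$ as a commuting product across a gap $k\notin J$ and then controlling the non-uniqueness of the resulting factorization, or invoking the stronger intermediate reduction in McMullen--Schulte that replaces the full intersection property by intersections with a single distinguished maximal parabolic. Until one of these is carried out, the self-contained proof is incomplete, and the citation to Proposition 2E16 (which is all the paper offers) remains the clean way to discharge the lemma. One secondary point in your favor: you are right that hypotheses (i) and (ii) as literally stated do not imply $[\rho_0,\rho_{n-1}]=1$; this is an implicit standing assumption (in McMullen--Schulte the group is assumed from the outset to be a string group generated by involutions), and in the paper's application it is verified separately from the orthogonality of the vectors $u_i$, so your treatment of that point is acceptable.
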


\begin{proof}
See~\cite[Proposition 2E16]{McS:ARP}.
\end{proof}

Let $(V,\phi)$ be an orthogonal $\F$-space, and
$u_1,\ldots,u_d$ a basis of $V$ consisting of nonsingular vectors.
Consider a putative string C-group representation of a group
generated by symmetries $\sigma_{u_1},\ldots,\sigma_{u_d}$. 
From the commutator relation~\eqref{eq:sig-comm}, we see that the string 
condition~\eqref{eq:sp} 
translates to an orthogonality condition on their associated vectors $u_1,\ldots,u_d$, namely
$(u_i,u_{i+k})=0$ if, and only if, $k>1$. 
One can replace $u_i$ with $u_i/(u_i,u_{i+1})$ for $i<d$ to ensure that
$(u_i,u_{i+1})=1$ without changing the symmetries. 
Thus, $\phi$ is represented relative to $u_1,\ldots,u_d$ by a matrix 
\begin{align}
\label{eq:define-F-alpha}
\Phi(\alpha_1,\ldots,\alpha_d) &=
\left[ \begin{array}{ccccc} \alpha_1 & 1 & & & \\  & \alpha_2 & 1 & & \\ &  & \ddots & \ddots & \\ 
& &  & \alpha_{d-1} & 1 \\ & & &  & \alpha_d
\end{array} \right].
\end{align}
Restricting to the case $d=5$, our strategy is to choose the scalars $\alpha_i$ so that everything 
works nicely. For us, this will mean that the following conditions hold.
\begin{enumerate}
\item[(a)] For each $1\leq i\leq 5$,  the dihedral group $\la \sigma_{u_i},\sigma_{u_{i+1}}\ra$
is equal to $\Sigma(\la u_i,u_{i+1}\ra)$ if $|\F|$ is even, or to one of $\Sigma(\la u_i,u_{i+1}\ra_{\square})$
or $\Sigma(\la u_i,u_{i+1}\ra_{\boxtimes})$ if $|\F|$ is odd. This 
means that each product $\sigma_{u_i}\sigma_{u_{i+1}}$
has order $q\pm 1$ if $q$ is even, or $(q\pm 1)/2$ if $q$ is odd.
An easy calculation shows that the restriction of this product to $\la u_i,u_{i+1}\ra$
is represented by the matrix
\begin{align*}
h(\alpha_i,\alpha_{i+1}) &= \left[ \begin{array}{cc} -1 & 1/\alpha_{i+1} \\ -1/\alpha_i & 1/(\alpha_i\alpha_{i+1})-1 \end{array} \right],
\end{align*}
\item[(b)] For each $1\leq i<j\leq 5$, the subspace $U_{ij}=\la u_i,\ldots u_j\ra$ is nonsingular.
\item[(c)] If $q$ is odd, then $\theta(\sigma_{u_i})=\theta(-1)$ for each $1\leq i\leq 5$.
\end{enumerate}
To achieve this, we consider separately the cases $|\F|$ even and $|\F|$ odd.
\medskip

\noindent {\bf \boldmath$|\F|$ is even.}~ 
Fix $\xi\in\F^{\times}$ such that $h(\xi,\xi)$ has order $q\pm 1$. For $\alpha\in\F^{\times}-\{\xi\}$, consider
the quadratic form $\phi$ represented by the matrix
\begin{align}
\label{eq:form-char2}
\Phi_{{\rm even}}(\alpha):=\Phi(\xi,\xi,\alpha,\alpha,\xi).
\end{align}
For condition (a), we simply require that both $h(\alpha,\alpha)$ and $h(\xi,\alpha)$ have order $q\pm 1$.
For condition (b), 
notice that 
\begin{align*}
U_{ij}^{\perp}\cap U_{ij} &= \left\{ \begin{array}{cl}
0 & \mbox{if $j-i$ is odd} \\
\la u_i+u_{i+2}\ra & \mbox{if $j=i+2$, and} \\
\la u_1+u_3+u_5\ra & \mbox{if $i=1$ and $j=5$}
\end{array} \right.
\end{align*}
As $\phi(u_i+u_{i+2})=\xi+\alpha\neq 0$ for $1\leq i\leq 3$, and $\phi(u_1+u_3+u_5)=\alpha\neq 0$,
it follows that $U_{ij}$ is always nonsingular, as required. Thus, we define
 \begin{align}
 \label{eq:define-Gamma-even}
 \Gamma_{{\rm even}}(\xi) &:= \{ \alpha\in\F^{\times}-\{\xi\} \colon~\mbox{both}~
 h(\alpha,\alpha)~\mbox{and}~h(\xi,\alpha)~\mbox{have order}\;q\pm 1\},
 \end{align}
 so that $\Phi_{{\rm even}}(\alpha)$ satisfies conditions (a) and (b)
 for any $\alpha\in\Gamma_{{\rm even}}(\xi)$.
 \medskip
 
 \noindent {\bf \boldmath$|\F|$ is odd.}~ 
 Fix $\xi\in\F^{\times}-\{\frac{1}{2}\}$ such that $\xi^2-\frac{1}{2}$ is a nonzero square, 
 and $h(1,\xi^2)$ has order $(q\pm 1)/2$. For $\alpha\in\F^{\times}$
 consider the quadratic form $\phi$ represented by 
\begin{align}
\label{eq:form-char-odd}
\Phi_{{\rm odd}}(\alpha):=\Phi(1,\xi^2,1,\alpha^2,1).
\end{align}
Condition (a) is satisfied so long as $h(1,\alpha^2)$ has order $(q\pm 1)/2$,
so let $A$ denote the set of squares in $\F^{\times}$ having this property. Choosing
the diagonal entries of $\Phi$ to be squares ensures that all $\theta(\sigma_{u_i})=\bar{0}$.
Hence, for $1\leq i\leq 4$ we have $\la \sigma_{u_i},\sigma_{u_{i+1}} \ra=\Sigma(\la u_i,u_{i+1} \ra_{\square})$.

Computing determinants of submatrices of the matrix
$\Phi_{{\rm odd}}(\alpha)+\Phi_{{\rm odd}}(\alpha)^{{\rm tr}}$
representing the symmetric bilinear form associated to $\phi$, 
we observe that
condition (b) holds provided $\alpha^2$ is selected from the set
\begin{align}
\label{eq:Gamma0}
\Gamma_0(\xi) &= 
A-
\left\{ \frac{1}{2},\,\frac{1}{4},\, \frac{\xi^2}{4\xi^2-1},\,\frac{2\xi^2-\frac{1}{4}}{4\xi^2-1},\, \frac{\xi^2-\frac{1}{4}}{4\xi^2-2},\,
\frac{\xi^2-\frac{3}{8}}{2\xi^2-1} 
\right\} 
\end{align}

To analyze condition (c), we compute an orthogonal basis $(w_i\colon 1\leq i\leq 5)$ for $V$. For then,
$-1=\prod_i\sigma_{w_i}$, so that $\theta(-1)=\prod_i\phi(w_i)\,(\F^{\times})^2$. Put
$U=\la u_1,u_3,u_5\ra$, so that $U^{\perp}=\la -u_1+2u_2-u_3,-u_3+2u_4-u_5\ra$.
As
\begin{align*}
\phi(-u_1+2u_2-u_3) = 4\xi^2-2 & & \phi(-u_3+2u_4-u_5)=4\alpha^2-2,
\end{align*}
and $\xi^2,\alpha^2\neq\frac{1}{2}$, it follows that $w_1:=-u_1+2u_2-u_3$ and $-u_3+2u_4-u_5$
are nonsingular.
Notice $\phi(w_1) = 4\xi^2-2=2^2(\xi^2-\frac{1}{2})$ is a square, so $\theta(\sigma_{w_1})=\bar{0}$.
 Also, 
\begin{align*}
w_2 &:=-u_1+2u_2+(1-4\xi^2)u_3+(8\xi^2-4)u_4+(2-4\xi^2)u_5
        \in w_1^{\perp}\cap U^{\perp},
\end{align*}
so we compute
\begin{align*}
\phi(w_2) &=16\alpha^2(2\xi^2-1)^2-(32\xi^4-28\xi^2+6).
\end{align*}
Define $m(\xi)=16(2\xi^2-1)^2\neq 0$, $b(\xi)=-(32\xi^4-28\xi^2+6)$, and
\begin{align}
\label{eq:define-Gamma-odd}
\Gamma_{{\rm odd}}(\xi) &= \{ \alpha^2\in \Gamma_0(\xi)\colon \alpha^2m(\xi)+b(\xi)~\mbox{is a nonzero square} \}.
\end{align}
We have shown that $\Phi_{{\rm odd}}(\alpha)$ satisfies conditions (a), (b) and (c) for any $\alpha\in \Gamma_{{\rm odd}}(\xi)$.
Note, we could have chosen $\xi$ so that $\xi^2-\frac{1}{2}$ is a non-square, in which case the condition on $\alpha$
in~\eqref{eq:define-Gamma-odd} would change to $\alpha^2m(\xi)+b(\xi)$ a non-square. This increases the pool
of scalars for a fixed $\F$, but our choice suffices to establish existence.

\begin{prop}
\label{prop:rank5}
Let $q\geq 4$, $\F$ be the field with $q$ elements, and  
$\xi\in\F$ any element having the properties described separately above for $q$ even and $q$ odd.  Let
$\Gamma(\xi)$ be the set defined in~\eqref{eq:define-Gamma-even} or~\eqref{eq:define-Gamma-odd}
for $q$ even or odd, respectively. Suppose $\Gamma(\xi)\neq\emptyset$, and fix $\alpha\in\Gamma(\xi)$. 
If $\phi$ is the quadratic form represented, relative to basis $(u_i\colon 1\leq i\leq 5)$, by the matrix 
$\Phi_{{\rm even}}(\alpha)$ or $\Phi_{{\rm odd}}(\alpha)$, then
\begin{align*}
(\;\Omega(V;\phi)\;;~\{-\sigma_{u_i}\colon 1\leq i\leq 5\}\;)
\end{align*}
is a string C-group representation of $\Omega(V)\cong\PSp(4,\F)$
of Schl\"{a}fli type $[p,p,p,p]$, where $p\in \{q-1,q+1\}$ if $q$ is even, and $p\in\{(q-1)/2,(q+1)/2\}$
if $q$ is odd.
\end{prop}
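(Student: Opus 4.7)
The plan is to verify the three defining properties of a string C-group representation for $G=\Omega(V)$ with generating set $R=\{\rho_i\colon 1\leq i\leq 5\}$, where $\rho_i:=-\sigma_{u_i}$ (this coincides with $\sigma_{u_i}$ when $q$ is even). Throughout I write $U_{ij}:=\la u_i,u_{i+1},\ldots,u_j\ra$.

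The string property~\eqref{eq:sp} is immediate: the matrices $\Phi_{{\rm even}}(\alpha)$ and $\Phi_{{\rm odd}}(\alpha)$ are tridiagonal, so $(u_i,u_j)=0$ whenever $|i-j|>1$; by~\eqref{eq:sig-comm}, $[\sigma_{u_i},\sigma_{u_j}]=1$, and this passes to $[\rho_i,\rho_j]=1$ since $-1$ is central in $\GL(V)$. The Schl\"afli type follows directly from condition~(a): the product $\rho_i\rho_{i+1}=\sigma_{u_i}\sigma_{u_{i+1}}$ is represented on $U_{i,i+1}$ by $h(\alpha_i,\alpha_{i+1})$, whose order is $p\in\{q-1,q+1\}$ in the even case and $p\in\{(q-1)/2,(q+1)/2\}$ in the odd case.

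Generation and the intersection property I would establish together by induction on $k$, proving for $2\leq k\leq 5$ that $H_k:=\la\rho_1,\ldots,\rho_k\ra$ is a rank-$k$ string C-group equal to the appropriate ``symmetry subgroup'' of $\Sigma(U_{1,k})$---all of $\Sigma(U_{1,k})$ in even characteristic, and the $-\sigma_u$-generated subgroup for $u\in(U_{1,k})_{\square}$ in odd characteristic (the latter requires condition~(c) to guarantee that each $\rho_i$ belongs to it). The base case $k=2$ is exactly condition~(a). For the inductive step I would apply Lemma~\ref{lem:ip}, which reduces matters to the identity
\[
\la\rho_1,\ldots,\rho_{k-1}\ra\cap\la\rho_2,\ldots,\rho_k\ra=\la\rho_2,\ldots,\rho_{k-1}\ra.
\]
By the inductive identification, this amounts to intersecting the two symmetry subgroups of $U_{1,k-1}$ and $U_{2,k}$, and Proposition~\ref{prop:intersections} (or Remark~\ref{rem:restrict} in the odd case) delivers the desired equality provided $U_{1,k-1}$, $U_{2,k}$, and $U_{2,k-1}$ are nonsingular---exactly what condition~(b) guarantees. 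The top case $k=5$, together with Lemma~\ref{fact:generation}, then forces $H_5=\Omega(V)$.

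The main obstacle is the inductive identification of $H_k$ itself: the containment in one direction is obvious, but the reverse requires that the overlapping dihedral pieces $\la\rho_i,\rho_{i+1}\ra$ from condition~(a) combine to produce every permitted symmetry on $U_{1,k}$. The near-maximal orders $q\pm 1$ (resp.\ $(q\pm 1)/2$) prevent $H_k$ from collapsing into a proper subgroup of the target, and the nonsingularity of every $U_{ij}$ from condition~(b) keeps the geometric setup clean; these are exactly the properties that the algebraic choices in~\eqref{eq:form-char2}, \eqref{eq:form-char-odd}, and the sets $\Gamma_{{\rm even}}(\xi),\Gamma_{{\rm odd}}(\xi)$ are engineered to provide. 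Once that identification is secured, Proposition~\ref{prop:intersections} handles the intersection cleanly and Lemma~\ref{lem:ip} assembles the rank-$5$ C-group from its rank-$4$ parts.
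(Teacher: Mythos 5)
Your proposal follows essentially the same route as the paper: identify each subchain of generators with the appropriate symmetry subgroup by induction, use Proposition~\ref{prop:intersections} (with Remark~\ref{rem:restrict} in odd characteristic) for the intersection identity, Lemma~\ref{lem:ip} to assemble the string C-group, and Lemma~\ref{fact:generation} together with condition~(c) to land on $\Omega(V)$ rather than a proper subgroup. Two small points: the step you flag as the ``main obstacle''---that the overlapping dihedral pieces generate all of $\Sigma(U_{1,k})$ (resp.\ $\Sigma((U_{1,k})_{\square})$)---is exactly where the paper defers to the argument of \cite[Lemma~5.3]{BFL:orth}, so your heuristic about near-maximal orders should be replaced by that citation rather than left as an informal remark; and the induction should be carried over all subchains $\la\rho_i,\ldots,\rho_j\ra$ (as the paper does, inducting on $j-i$), not only those starting at $\rho_1$, since Lemma~\ref{lem:ip} requires both $\la\rho_1,\ldots,\rho_{k-1}\ra$ and $\la\rho_2,\ldots,\rho_k\ra$ to already be string C-groups.
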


\begin{proof}
We first claim that, for any such choice of scalars $\xi,\alpha$:
\begin{align}
\label{eq:gen}
\mbox{for}\;1\leq i<j\leq 5, & & \la \sigma_{u_i},\ldots,\sigma_{u_j}\ra= \left\{
\begin{array}{ll}
\Sigma(\la u_i,\ldots,u_j\ra) & \mbox{if $q$ is even} \\
\Sigma(\la u_i,\ldots,u_j\ra_{\square}) & \mbox{if $q$ is odd} 
\end{array}
\right..
\end{align}
Recall that for $\alpha\in\Gamma(\xi)$, conditions (a), (b) and (c) are satisfied for the form
$\phi$ represented by matrix $\Phi_{{\rm even}}(\alpha)$ or $\Phi_{{\rm odd}}(\alpha)$.
In particular, condition (a) ensures that~\eqref{eq:gen} holds whenever $j=i+1$. This is the base
case of an induction on $j-i$.
For $q$ even, follow the argument in~\cite[Lemma 5.3]{BFL:orth}---although the form matrix 
is different, only conditions (a) and (b) matter.
The $q$ odd case is identical except
that $\la \sigma_{u_i},\ldots,\sigma_{u_j}\ra=\Sigma(\la u_i,\ldots,u_j\ra_{\square})$
since we generate only with symmetries having
square spinor norm; see Remark~\ref{rem:restrict}.

We next show, again using induction, that $\sigma_{u_1},\ldots,\sigma_{u_5}$ generates a string C-group.
For $1\leq i\leq 4$, the dihedral group $\la \sigma_{u_i},\sigma_{u_{i+1}} \ra$ (with its defining generators) 
is clearly a string C-group. For $j-i>1$, consider the group $\la \sigma_{u_i},\ldots,\sigma_{u_j}\ra$. By
induction, both $\la \sigma_{u_i},\ldots,\sigma_{u_{j-1}}\ra$ and $\la \sigma_{u_{i+1}},\ldots,\sigma_{u_j}\ra$
are string C-groups. Furthermore, by Proposition~\ref{prop:intersections} and the claim above,
\begin{align*}
\la \sigma_{u_i},\ldots,\sigma_{u_{j-1}}\ra \cap \la \sigma_{u_{i+1}},\ldots,\sigma_{u_j}\ra & = \Sigma(\la u_i,\ldots,u_{j-1}\ra)\cap\Sigma(\la u_{i+1},\ldots,u_j\ra) \\
        & = \Sigma(\la u_i,\ldots,u_{j-1}\ra \cap \la u_{i+1},\ldots,u_j\ra) \\
        & = \Sigma(\la u_{i+1},\ldots,u_{j-1}\ra) \\
        & = \la \sigma_{u_{i+1}},\ldots,\sigma_{u_{j-1}} \ra.
\end{align*}
It now follows from Lemma~\ref{lem:ip} that $\la \sigma_{u_i},\ldots,\sigma_{u_j} \ra$ is a string C-group.

Finally, using the initial claim again, together with the discussion in Section~\ref{subsec:gen-inv}, we have
$\la \sigma_{u_1},\ldots,\sigma_{u_5} \ra=\ker\theta$. When $q$ is even, $\Orth(V)=\ker\theta=\Omega(V)$, so
$(\Omega(V)\,;\,\{\sigma_{u_1},\ldots,\sigma_{u_5}\})$
is a string C-group representation.
When $q$ is odd, the choices of $\xi$ and $\alpha$ ensure that condition (c) holds, so that $\theta(-1)=\bar{0}$. Thus,
by Lemma~\ref{fact:generation}(i), $(\Omega(V)\,;\,\{-\sigma_{u_1},\ldots,-\sigma_{u_5}\})$
is a string C-group representation.
\end{proof}

\section{Proof of Theorem~\ref{thm:psp4-main}}
\label{sec:proof}
We first limit the rank of a string C-group representation for $G=\PSp(4,\F)$. Once again we approach
this within the context of the isomorphic group $\Omega(V)$, where $V$ is a 5-dimensional orthogonal space
equipped with quadratic form $\phi$.

\begin{lemma}
\label{lem:rank-restrict}
Let $\F$ be any finite field, $V$ a 5-dimension $\F$ space equipped with quadratic form $\phi$,
and $n\geq 6$ an integer.
If $H$ is a subgroup of $\Orth(V)$ generated 
by a sequence of involutions $\rho_0,\ldots,\rho_{n-1}$ 
satisfying~\eqref{eq:sp}, 
then $\rho_0,\ldots,\rho_{n-1}$ violates the intersection condition~\eqref{eq:ip}.
\end{lemma}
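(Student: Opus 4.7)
The plan is to argue by contraposition: assume the intersection condition~\eqref{eq:ip} also holds and derive $n\leq 5$.  For $0\leq i\leq n$ set $H_i=\la\rho_0,\ldots,\rho_{i-1}\ra$ and
$$F_i=V^{H_i}=\bigcap_{k<i}V^{\rho_k}=[V,H_i]^{\perp},$$
the last equality being valid for any isometry of a non-degenerate form by the standard calculation $(w,u-uh)=(w-wh^{-1},u)$.  The goal is to show that the chain $V=F_0\supseteq F_1\supseteq\cdots\supseteq F_n$ is strictly decreasing, since that forces $n\leq\dim F_0=5$, contradicting $n\geq 6$.

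The main obstacle is ruling out a stall $F_i=F_{i+1}$.  Suppose one occurs at the smallest such index $i\geq 1$; then $\rho_i$ fixes $F_i$ pointwise, so $[V,\rho_i]\subseteq F_i^{\perp}=[V,H_i]$, and the preceding strict drops give $\dim F_i^{\perp}=i$.  Both $H_i$ and $\rho_i$ then lie in the pointwise centralizer $\mathrm{cent}_G(F_i)$ which, by the stabilizer factorization used in the proof of Proposition~\ref{prop:intersections}, is the natural embedding of $\Orth(F_i^{\perp})$ inside $\Orth(V)$.  Restriction to $F_i^{\perp}$ therefore produces an injective homomorphism $r\colon H_{i+1}\to\Orth(F_i^{\perp})$ under which $(r(H_{i+1});r(\rho_0),\ldots,r(\rho_i))$ is a faithful string C-group representation of rank $i+1$ on an $i$-dimensional orthogonal space.

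To close the argument I would invoke---and simultaneously prove by induction on $\dim W$---the auxiliary bound that a string C-group representation inside $\Orth(W)$ has rank at most $\dim W$; the inductive step is exactly the chain argument above, and the base cases $\dim W\leq 2$ are immediate because $\Orth(W)$ is then dihedral or smaller.  For $i\leq 4$ this gives the contradiction $i+1\leq i$.  The only case the restriction argument does not cover is when the smallest forward stall lies at $i=5$, where $F_5=0$ and restricting shrinks nothing; I would dispatch this residual case by running the symmetric argument on the reverse chain $\tilde F_j=V^{\la\rho_{n-j},\ldots,\rho_{n-1}\ra}$, which must also stall, and if that stall likewise lies only at the terminal index then the rigid pattern $\dim F_j=\dim\tilde F_j=5-j$ forces $\rho_0$ and $\rho_{n-1}$ to be symmetries and constrains the supports of the intermediate generators tightly enough to produce a dependency among the $\rho_j$ that violates~\eqref{eq:ip}.
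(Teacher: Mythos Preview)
Your chain-of-fixed-spaces strategy has a genuine gap at the step ``the preceding strict drops give $\dim F_i^{\perp}=i$.''  This equality would require each of the first $i$ drops $F_{k-1}\supsetneq F_k$ to have codimension exactly~$1$, but nothing in the hypotheses forces that.  For $q$ odd, an involution of $\Orth(V)$ can have $(-1)$-eigenspace of any dimension from $1$ to $4$; in particular $-\sigma_u$ has $V^{-\sigma_u}=\la u\ra$, so if $\rho_0=-\sigma_u$ then already $\dim F_1=1$ and $\dim F_1^{\perp}=4$.  When the first stall occurs at some $i$ with $\dim F_i^{\perp}>i$, your inductive bound gives only $i+1\leq\dim F_i^{\perp}$, which is no contradiction.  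The same obstruction undermines the ``rigid pattern $\dim F_j=5-j$'' you invoke in the terminal case, so the hand-waved dependency argument there has no foundation to stand on.

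There is also a secondary problem: the stabilizer factorization you borrow from Proposition~\ref{prop:intersections} applies only when the subspace in question is nonsingular, and nothing guarantees that $F_i$ (or $F_i^{\perp}$) is nonsingular; moreover the identity $V^{H_i}=[V,H_i]^{\perp}$ needs extra care when $q$ is even and $\dim V$ is odd, since the bilinear form on $V$ then has a nontrivial radical.

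The paper avoids both difficulties by a different and shorter argument: it takes $L=\la\rho_0,\rho_1,\rho_2\ra$ and $R=\la\rho_3,\rho_4,\rho_5\ra$, observes that each has support of dimension at least~$3$ (using that consecutive generators do not commute), and then uses $\dim V=5$ to force $[V,L]\cap[V,R]\neq 0$, from which it extracts a nontrivial element of $L\cap R$.  This sidesteps any need to control the size of individual drops or the singularity of intermediate subspaces.
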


\begin{proof}
 A non-central involution of $\Orth(V)$
is one of $\pm\sigma_u$ or $\pm \sigma_u\sigma_w$ for $u,w\in V$ nonsingular and $w\in u^{\perp}$.
Consider the non-commuting subgroups $L=\la \rho_0,\rho_1,\rho_2\ra$ and 
$R=\la \rho_3,\rho_4,\rho_5\ra$ of $H$ (possible since $n\geq 6$). 
As each product $\rho_i\rho_{i+1}$ has order exceeding 2, $L$  induces on 
$[V,L]$, of dimension at least 3, a group generated by non-commuting dihedral groups.
Similarly $R$ induces on $[V,R]$, of dimension at least 3, a second such group. 
As $\dim V=5$, so $[V,L]\cap[V,R]$ is nontrivial. It follows that $L\cap R$
is also nontrivial, so condition~\eqref{eq:ip} fails for the sequence $\rho_0,\ldots,\rho_{n-1}$.
\end{proof}

Lemma~\ref{lem:rank-restrict} shows that $\rk(G)\subseteq \{3,4,5\}$.
To complete the proof of Theorem~\ref{thm:psp4-main} we establish the
existence of a suitable rank 5 string C-group representation for 
$G\cong\Omega(V)$. 
This will show that $5\in\rk(G)$. We will
then apply the following {\em rank reduction} technique to show that $\{3,4\}\subseteq \rk(G)$.

\begin{theorem}
\label{thm:rank-reduction}
Let $(G;\{\rho_0,\ldots, \rho_{n-1}\})$ be an irreducible string C-group 
representation of rank $n\geq 4$.
If $\rho_2\rho_3$ has odd order, then 
$(G;\{\rho_1, \rho_0\rho_2, \rho_3, \ldots, \rho_{n-1}\})$ is a 
string C-group representation of rank $n-1$.
\end{theorem}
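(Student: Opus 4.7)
The plan is to verify the four defining conditions of a string C-group representation for the new sequence
\begin{align*}
\tau_0 = \rho_1, \quad \tau_1 = \rho_0\rho_2, \quad \tau_i = \rho_{i+1} \quad (2 \leq i \leq n-2).
\end{align*}
Each $\tau_i$ is an involution: the only nontrivial case is $\tau_1^2 = 1$, which holds because $\rho_0$ and $\rho_2$ commute, by the original string property applied to the non-adjacent pair $(0,2)$. The string condition for the $\tau_i$ reduces to a routine case analysis, and the only case requiring comment is $\tau_1\tau_j$ for $j \geq 3$: here $\tau_1 = \rho_0\rho_2$ commutes with $\tau_j = \rho_{j+1}$ because both $\rho_0$ and $\rho_2$ commute with $\rho_{j+1}$.

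The odd-order hypothesis on $\rho_2\rho_3$ enters in showing that the $\tau_i$ still generate $G$. Since $\rho_0$ commutes with both $\rho_2$ and $\rho_3$, it commutes with $\rho_2\rho_3$, so writing $m$ for the odd order of $\rho_2\rho_3$ we compute
\begin{align*}
(\tau_1\tau_2)^m \;=\; (\rho_0\rho_2\cdot\rho_3)^m \;=\; (\rho_0 \cdot \rho_2\rho_3)^m \;=\; \rho_0^m(\rho_2\rho_3)^m \;=\; \rho_0,
\end{align*}
so $\rho_0 \in \la \tau_0,\ldots,\tau_{n-2}\ra$. Then $\rho_2 = \rho_0\tau_1$ is also recovered, and the remaining $\rho_i$ appear directly among the $\tau_i$.

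The main obstacle is the intersection property, which I plan to attack inductively via Lemma~\ref{lem:ip}: it suffices to show that the maximal parabolics $P_L = \la \tau_0,\ldots,\tau_{n-3}\ra$ and $P_R = \la \tau_1,\ldots,\tau_{n-2}\ra$ are themselves string C-groups with the displayed generators, and that $P_L \cap P_R = \la \tau_1,\ldots,\tau_{n-3}\ra$. The subgroup $P_L$ is exactly the rank-reduction of the rank-$(n-1)$ string C-group representation $(\la \rho_0,\ldots,\rho_{n-2}\ra\,;\,\{\rho_0,\ldots,\rho_{n-2}\})$ relative to the same pair $\rho_2\rho_3$, so induction on $n$ handles it; the base case $n=4$ reduces to a dihedral group. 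For $P_R$, the original string and intersection properties show that $\rho_0$ is central in $\la \rho_0,\rho_2,\ldots,\rho_{n-1}\ra$ and does not lie in $\la \rho_2,\ldots,\rho_{n-1}\ra$, producing a direct product decomposition
\begin{align*}
P_R \;=\; \la \rho_0,\rho_2,\ldots,\rho_{n-1}\ra \;=\; \la \rho_0\ra \times \la \rho_2,\ldots,\rho_{n-1}\ra,
\end{align*}
and the string C-group structure of $P_R$ with generators $\{\tau_1,\ldots,\tau_{n-2}\}$ lifts from that of the parabolic $\la \rho_2,\ldots,\rho_{n-1}\ra$ via the projection that kills the central $\rho_0$. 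The subtlest point is the equality $P_L \cap P_R = \la \tau_1,\ldots,\tau_{n-3}\ra$: by the generation step, any $g \in P_L \cap P_R$ lies in $\la \rho_0,\rho_1,\ldots,\rho_{n-2}\ra \cap \la \rho_0,\rho_2,\ldots,\rho_{n-1}\ra = \la \rho_0,\rho_2,\ldots,\rho_{n-2}\ra$ by the original intersection property, and the direct-product structure of $P_R$ then lets one separate the $\rho_0$-component of $g$ and repackage the remainder as a word in $\tau_1,\ldots,\tau_{n-3}$ (using $\rho_0\rho_2 = \tau_1$). The irreducibility hypothesis is used throughout to exclude degenerate cases that would disconnect the Coxeter diagrams of the parabolic subgroups and thereby obstruct the lifting and separation arguments.
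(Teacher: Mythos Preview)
The paper does not prove Theorem~\ref{thm:rank-reduction}; it quotes the statement and attributes the proof to~\cite{BL:rank-reduction} in the remark that follows. So there is no ``paper's own proof'' to compare against, and your proposal is supplying something the present paper deliberately outsources.

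As for the proposal itself: the verification of the involution, string, and generation conditions is clean, and your inductive scheme via Lemma~\ref{lem:ip} is the natural one. For $n\geq 5$ the intersection step is in fact simpler than you make it: once $g\in \la\rho_0,\rho_2,\ldots,\rho_{n-2}\ra$ by the original intersection property, observe that this subgroup \emph{equals} $\la\tau_1,\ldots,\tau_{n-3}\ra$, because $\rho_0=(\tau_1\tau_2)^m$ and $\rho_2=\rho_0\tau_1$ already lie in $\la\tau_1,\tau_2\ra$. No ``separation of the $\rho_0$-component'' is needed.

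There is, however, a real gap at the base case $n=4$. Here $\la\tau_1,\ldots,\tau_{n-3}\ra=\la\tau_1\ra=\la\rho_0\rho_2\ra$, while the original intersection property only places $g$ in $\la\rho_0,\rho_2\ra$, a group of order $4$. To finish you must show $\rho_0\notin P_L=\la\rho_1,\rho_0\rho_2\ra$, and your appeal to irreducibility ``excluding degenerate cases'' does not cover this. The missing argument: if $\rho_0\in\la\rho_1,\rho_0\rho_2\ra$ then $\la\rho_0,\rho_1,\rho_2\ra$ is dihedral on the involutions $\rho_1,\rho_0\rho_2$; since $\rho_0$ is an involution commuting with $\rho_0\rho_2$ but distinct from it, either $\rho_0$ is central (forcing $[\rho_0,\rho_1]=1$) or $\rho_0\cdot\rho_0\rho_2=\rho_2$ is the central involution (forcing $[\rho_1,\rho_2]=1$), and either contradicts irreducibility. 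You should also spell out the ``lifting'' of the intersection property to $P_R$ with generators $\tau_1,\ldots,\tau_{n-2}$: it holds, but the case where $1$ lies in both index sets while $2$ lies in only one needs the direct-product decomposition and is not automatic from the quotient.
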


\begin{remark}
This `Petrie like' construction was developed in~\cite{HL:Petrie} and first used as a rank reduction technique
for the symmetric groups in~\cite{FL:symmetric}. The general technique, along with the criteria in Theorem~\ref{thm:rank-reduction}, 
was established in the recent paper~\cite{BL:rank-reduction}.
\end{remark}

\begin{proof}[Proof of Theorem~\ref{thm:psp4-main}]
As the entire result can be verified by brute force for moderate values of $q$ on a computer
(see Section~\ref{subsec:software}),
for convenience we assume that $q\geq 4$ if $q$ is even and that $q\geq 11$ if $q$ is odd.
Referring to Proposition~\ref{prop:rank5}, our first task is to establish the existence of a suitable
scalar $\xi$, and to show that $\Gamma(\xi)$ is nonempty.
We consider $q$ even and odd separately.

Referring to~\eqref{eq:define-Gamma-even} for $q$ even, the only condition is that there are distinct
scalars $\xi,\alpha\in\F^{\times}$ such that the matrices $h(\xi,\xi),\,h(\alpha,\alpha),\,h(\xi,\alpha)$ have 
order $q\pm 1$. For each $\xi,\alpha\in\F^{\times}-\{0,1\}$, all of these matrices
have order dividing $q\pm 1$. The proportion of scalars defining generators of those cyclic groups behaves 
as $1/(c\log\log q)$ and there are at least two such scalars in $\F^{\times}$ for all $q\geq 4$.

Referring to~\eqref{eq:Gamma0} and~\eqref{eq:define-Gamma-odd} for $q$ odd, while we
halve the number of scalars we consider by only using squares, the proportion of these elements
generating cyclic groups of the appropriate order is the same as in the even case.
Upon restricting to $\Gamma_0(\xi)$, we discard up to 6 more scalars.
Finally, consider the condition (as $\alpha^2$ ranges over $\Gamma_0(\xi)$ for fixed $\xi$) that $\alpha^2m(\xi)+b(\xi)$ 
is a square. If $b(\xi)=0$, the condition always holds, but
otherwise it will hold for roughly half of the choices of $\alpha^2$.
In particular, $\Gamma_{{\rm odd}}(q)$ is nonempty so long as $q$ is large enough
(one can check on the computer that indeed $q\geq 11$ is large enough).

Thus, by Proposition~\ref{prop:rank5}, for $q$ large enough there is a string C-group representation
of rank 5 for $\Omega(V)$. When $q$ is odd, moreover, it is possible to choose scalars so that
the Schl\"{a}fli type of the representation consists of any combination of $(q-1)/2$ and $(q+1)/2$. In particular,
observing the residue class of $q$ modulo 4, we can arrange for this to be a sequence of odd numbers.
(Note, when $q$ is even, our construction always produces Schl\"{a}fli types of odd numbers $q\pm 1$.)
Hence, we can now simply apply Theorem~\ref{thm:rank-reduction} twice to our string C-group to obtain
new representations of ranks 4 and 3. This completes the proof.
\end{proof}

\begin{remark}
\label{rem:rank3}
Although our proof gives string C-group representations of rank 3 for $\PSp(4,\F)$ via rank reduction,
we remark that at least one other construction was already known. Indeed, as mentioned in the introduction, a
complete determination of the simple groups of Lie type having string 
C-group representations of rank 3 may be extracted from the work of Nuzhin. 
\end{remark}

\section{Concluding remarks}

In this final section of the paper we make a number of remarks pertaining to the
results of the foregoing sections.

\subsection{Choosing scalars}
\label{subsec:scalars}
We chose scalars $\xi,\alpha\in\F^{\times}$ in Section~\ref{sec:rank5} and defined 
quadratic forms $\Phi_{{\rm even}}(\alpha)$ and $\Phi_{{\rm odd}}(\alpha)$
so as to make the verification of the corresponding string C-group representation
as direct as possible. However, there is a lot of flexibility to choose other scalars
in such a way that new (non-isomorphic) representations are obtained. For instance
we do not absolutely require that all of the subspaces $\la u_i,\ldots,u_j\ra$ be 
nonsingular;
indeed, the construction in~\cite{BFL:orth} does not insist on this. 

\subsection{Higher rank representations in higher dimensions} 
\label{subsec:higher-rank}
As we have already noted, the idea in~\cite{BFL:orth} is to use symmetries to represent
$\Orth(d,\Fe)$ as a string C-group of rank $d$. It was not fully appreciated at the time that
the same technique can be used to build rank $d$ representations for $\Orth(d,\F)$
when $|\F|$ is odd. By restricting the scalars $\alpha_i$ in~\eqref{eq:define-F-alpha} to be either all
squares or all non-squares, one can further generate the two maximal subgroups of
$\Orth(d,\F)$ generated by symmetries. It seems likely, then, that our trick for generating
$\Omega(5,\F)$ by negating symmetries extends to $\Omega(2n+1,\F)$,
so that $2n+1\in\rk(\Omega(2n+1,\F))$ for all $q\geq 4$. However, we have no intuition 
to offer in regard to the highest rank string C-group representation of the quasisimple
groups $\Omega^{\pm}(2n,\F)$.

\subsection{Another rank 4 construction} 
\label{subsec:rank4}
In an earlier draft of this paper---before the emergence of the rank reduction technique---a direct 
construction of a string C-group
representation of rank 4 for $\Omega(V)$ was discovered that utilized, to a greater extent, geometric properties
of involutions. For completeness we provide this construction, but in the interest of brevity 
omit the verification.

Let $q\geq 5$ be an odd prime power, and $\F$ the field of $q$ elements; an explicit rank 4 construction of $\Orth(5,\Fe)$
was given in~\cite[Theorem 6.1]{BFL:orth}. The tuple
\[
  \left[ \begin{smallmatrix} -1 & \cdot & \cdot & \cdot & \cdot \\ \cdot & 1 & \cdot & \cdot & \cdot \\ \cdot & \cdot & 1 \cdot & \cdot \\ \cdot & \cdot & \cdot & 1 & \cdot \\ \cdot & \cdot & \cdot & \cdot & -1 \end{smallmatrix} \right],~~  
   \left[ \begin{smallmatrix}  f(\alpha) & g(\alpha) & \cdot & \cdot & \cdot  \\
                                                      g(\alpha) & -f(\alpha) & \cdot & \cdot &  \cdot \\
                                                       \cdot & \cdot & 1 & \cdot & \cdot \\ 
                                                     \cdot & \cdot & \cdot & -f(\alpha) & g(\alpha) \\
                                                    \cdot & \cdot  & \cdot & g(\alpha) & f(\alpha)
                    \end{smallmatrix} \right],~~              
    \left[ \begin{smallmatrix} -1 & \cdot & \cdot & \cdot & \cdot \\
                                                 \cdot & f(\beta) & -g(\beta) & \cdot & \cdot \\
                                                    \cdot & -g(\beta) & -f(\beta) & \cdot  & \cdot \\
                                                    \cdot & \cdot & \cdot & -1 & \cdot \\
                                                     \cdot & \cdot & \cdot & \cdot & -1
                    \end{smallmatrix}
            \right],~~         
    \left[ \begin{smallmatrix} \cdot & \cdot & \cdot & \cdot  & 1 \\
                                                 \cdot & \cdot & \cdot & 1 & \cdot \\
                                                    \cdot & \cdot & 1 & \cdot & \cdot \\
                                                    \cdot & 1 & \cdot & \cdot & \cdot \\
                                                    1 & \cdot & \cdot & \cdot & \cdot
                    \end{smallmatrix}
            \right],
 \]
  where $\alpha$ and $\beta$ come from (different) restricted sets of scalars, and 
\begin{align}
\label{eq:define-fg}
         f(\alpha) &=  \frac{1-\alpha^2}{1+\alpha^2} &
                                                   g(\alpha) &= \frac{-2\alpha}{1+\alpha^2},
\end{align}
defines a string C-group representation for $\Omega(5,\F)\cong\PSp(4,\F)$ of rank 4. It is not
isomorphic to the one given in the proof of Theorem~\ref{thm:psp4-main}, which applied reduction to
the rank 5 representation: the first, second, and fourth generators all have
2-dimensional $-1$-eigenspace, whereas the earlier construction had three generators with
4-dimensional $-1$-eigenspace.

\subsection{Software}
\label{subsec:software}
A software package called SGGI 
has been implemented by the author in the {\sc Magma} system~\cite{magma} and is publicly
available on GitHub. Among other things the package contains 
the explicit constructions in this paper, including the one in Section~\ref{subsec:rank4}, and functions 
to carry out exhaustive searches for string C-group
representations. These functions can be used to verify Theorem~\ref{thm:psp4-main} for small values of $q$, but
we caution that  $\PSp(4,\Bbb{F}_{19})$ is nearing the limit of practicability for
exhaustive searches. We note that other functions to compute with string C-groups, written by 
Dimitri Leemans, are distributed with {\sc Magma}.
\bigskip

\noindent {\bf Acknowledgments.} The author thanks J.T. Ferrara for the collection and synthesis of experimental data
that was particularly helpful when designing the rank 4 construction in Section~\ref{subsec:rank4}.
He also thanks the anonymous referees for several helpful suggestions.

\bibliographystyle{amsalpha}

\end{document}